\documentclass[leqno]{amsart}


\usepackage[english]{babel} 									
\usepackage[T1]{fontenc}							
\usepackage[utf8]{inputenx}
				

\usepackage{mathtools}								
\usepackage{empheq}									
\usepackage{enumitem}								

	
\usepackage{amssymb}								
\usepackage{mathrsfs}								
\usepackage{bm}                                     
				

\usepackage{verbatim}                               
\usepackage{iflang}									
\usepackage{xifthen}								

\usepackage[final]{pdfpages}						
\usepackage{todonotes}								
\usepackage{aliascnt}								
\usepackage{needspace}								
\usepackage{refcount}


\usepackage{subcaption}			
\usepackage{placeins} 			
\usepackage{grffile}			
\usepackage[all]{xy}			

\usepackage{transparent}
\usepackage{color}				

\usepackage{tikz}               
\usetikzlibrary{math,arrows.meta, cd}
\DeclareGraphicsExtensions{.pdf,.png,.jpg}

\usepackage{graphicx}			
 
\usepackage{diagbox} 

\usepackage[babel]{csquotes}							

\usepackage{nameref}									
\usepackage[colorlinks	=	true,						
          	 	linkcolor	=	red,
            	urlcolor	=	red,
            	citecolor	=	red]%
            	{hyperref}
\usepackage{bookmark}									
\usepackage{cleveref}									
\setcounter{tocdepth}{1}


\numberwithin{equation}{section}

\newtheorem{theorem}{Theorem}[section]

\newtheorem{corollary}[theorem]{Corollary}
\newtheorem{lemma}[theorem]{Lemma}

\newtheorem*{theorem*}{Theorem}

{\bf}{\it}

\theoremstyle{remark}
\newtheorem{remark}[theorem]{Remark}

\newtheorem*{remark*}{Remark}

\theoremstyle{remark}

\DeclareMathOperator{\diam}{diam}

\allowdisplaybreaks

\newcommand{\apmd}[2][]{							
	\ifthenelse{\equal{#1}{}}%
					{ \operatorname{N}_{#2}	}%
					{ \operatorname{N}_{#1,#2} 	}}

\newcommand{\aint}[2][]{
	\ifthenelse{\equal{#1}{}}%
					{%
\mathchoice%
      {\mathop{\kern 0.2em\vrule width 0.6em height 0.69678ex depth -0.58065ex
              \kern -0.8em \intop}\nolimits_{\kern -0.45em#2}^{#1}}%
      {\mathop{\kern 0.1em\vrule width 0.5em height 0.69678ex depth -0.60387ex
              \kern -0.6em \intop}\nolimits_{#2}^{#1}}%
      {\mathop{\kern 0.1em\vrule width 0.5em height 0.69678ex depth -0.60387ex
              \kern -0.6em \intop}\nolimits_{#2}^{#1}}%
      {\mathop{\kern 0.1em\vrule width 0.5em height 0.69678ex depth -0.60387ex
              \kern -0.6em \intop}\nolimits_{#2}^{#1}}}%
					{%
\mathchoice%
      {\mathop{\kern 0.2em\vrule width 0.6em height 0.69678ex depth -0.58065ex                                              
              \kern -0.8em \intop}\nolimits_{\kern -0.45em#1}^{#2}}%
      {\mathop{\kern 0.1em\vrule width 0.5em height 0.69678ex depth -0.60387ex
              \kern -0.6em \intop}\nolimits_{#1}^{#2}}%
      {\mathop{\kern 0.1em\vrule width 0.5em height 0.69678ex depth -0.60387ex
              \kern -0.6em \intop}\nolimits_{#1}^{#2}}%
      {\mathop{\kern 0.1em\vrule width 0.5em height 0.69678ex depth -0.60387ex
              \kern -0.6em \intop}\nolimits_{#1}^{#2}}}}



\newcommand{\vol}{\mathrm{vol}}


\usepackage{thmtools}
\usepackage{thm-restate}

\begin{document}
\title[Growth of entire conformal curves]{Entire conformal curves are affine or have super-Euclidean energy growth}

\author{Toni Ikonen}

\address{Department of Mathematics\\ University of Fribourg\\  Chemin du Mus\'ee 23\\  1700 Fribourg, Switzerland}

\email{toni.ikonen@unifr.ch}

\keywords{calibration, calibrated submanifold, conformal mappings, isoperimetric inequality, minimal submanifold, pseudoholomorphic curve, quasiregular mapping}
\thanks{The author was supported by the Swiss National Science Foundation grant 212867.}
\subjclass[2020]{Primary 30C65; Secondary  49Q15, 53C65}

\begin{abstract}
   We prove that entire conformal curves $\mathbb{R}^n \rightarrow \mathbb{R}^m$ fall into two classes: either the curve is affine or the average energy in a ball is strictly increasing for large radii and diverges to infinity. This rigidity follows from a blow-down argument and the strong interaction of the generalized Cauchy--Riemann equations with calibrated geometries and the sharp isoperimetric inequality for integral currents due to Almgren. As an application, we prove that every entire Lipschitz conformal curve is affine. This can be considered a higher-dimensional analog of the statement that an entire holomorphic function with a bounded complex differential is affine.

   We also recall that a certain punctured cone over a Legendrian torus provides a submanifold in $\mathbb{R}^n$, for $n \geq 3$, for which an entire conformal curve yields a conformal covering map. As a related result, we prove that if a non-constant entire conformal curve factors through an $n$-dimensional submanifold, then the submanifold is calibrated and conformally equivalent to a flat non-compact quotient of $\mathbb{R}^n$.
\end{abstract}

\maketitle

\section{Introduction}
\subsection{Background on conformal curves}
A continuous mapping $f \colon \Omega \rightarrow \mathbb{R}^n$ from a domain $\Omega \subset \mathbb{R}^n$ is a \emph{conformal mapping} if
\begin{equation}\label{eq:conformal}
    f \in \mathcal{C}^{1}( \Omega, \mathbb{R}^n )
    \quad\text{and}\quad
    \| Df \|^n = \star f^{*}\mathrm{vol}_{\mathbb{R}^n}
    \quad\text{everywhere};
\end{equation}
here $\|Df\|$ is the operator norm of the differential of $f$, $f^{*}\mathrm{vol}_N$ is the pullback of the Riemannian volume form $\mathrm{vol}_{ \mathbb{R}^n } = dx_1 \wedge \dots \wedge dx_n$, and $\star f^{*}\mathrm{vol}_{N}$ its Hodge star dual. A direct computation shows that $\star f^{*}\mathrm{vol}_{\mathbb{R}^n}$ is the Jacobian determinant of $f$.

Similarly, a continuous mapping $f \colon \Omega \rightarrow \mathbb{R}^{2n}$, for a domain $\Omega \subset \mathbb{R}^2$, is a \emph{holomorphic curve} if
\begin{equation}\label{eq:holomorphic}
    f \in \mathcal{C}^{1}( \Omega, \mathbb{R}^{2n} )
    \quad\text{and}\quad
    \| Df \|^2 = \star f^{*}\omega_{\mathrm{sym}}
    \quad\text{everywhere}
\end{equation}
where $\omega_{\mathrm{sym}} = \sum_{ j = 1 }^{ n } dx_{2j-1} \wedge dx_{2j}$ is the standard symplectic form. It follows that the right-hand side of \eqref{eq:holomorphic} can be expressed as the sum of the Jacobian determinants of minors of the differential of $f$. In fact, if we identify $\mathbb{R}^2 \simeq \mathbb{C}$ and $\mathbb{R}^{2n} \simeq \mathbb{C} \times \dots \times \mathbb{C} = \mathbb{C}^{n}$, \eqref{eq:holomorphic} holds for $f = ( f_1, \dots, f_n ) \colon \mathbb{C} \rightarrow \mathbb{C}^n$ if and only if the component functions are holomorphic.

Equations \eqref{eq:conformal} and \eqref{eq:holomorphic}, respectively, are generalizations of Cauchy--Riemann equations for holomorphic mappings in the complex plane, where the volume form $\mathrm{vol}_{\mathbb{R}^n}$ and the symplectic form $\omega_{\mathrm{sym}}$ play a special role. In this manuscript, we consider the class of conformal curves which satisfy Cauchy--Riemann equations relative to a constant-coefficient form $\omega \in \Lambda^{n}\mathbb{R}^m \setminus \{0\}$ for some $2 \leq n \leq m$. To this end, a continuous map $F \colon \Omega \rightarrow \mathbb{R}^m$, for an open set $\Omega \subset \mathbb{R}^m$, is a \emph{conformal $\omega$-curve} if
\begin{equation}\label{eq:quasiregular:curve}
    F \in \mathcal{C}^{1}( \Omega, \mathbb{R}^m )
    \quad\text{and}\quad
    ( \|\omega\| \circ F ) \| DF \|^n = \star F^{*}\omega
    \quad\text{everywhere}.
\end{equation}
For a smooth form $\omega \in \Omega^{n}( \mathbb{R}^m )$,
\begin{equation}\label{eq:comass:extremality}
    \| \omega \|( x )
    =
    \max
    \left\{
        \omega_x( v_1 \wedge \dots \wedge v_n )
        \colon
        ( v_1, \dots, v_n )
        \text{ orthonormal}
    \right\}
\end{equation}
is the \emph{comass} of $\omega$ at the point $x$. Harvey and Lawson \cite{Har:Law:82} coined the term \emph{calibration} to refer to a closed form whose comass is at most one everywhere. A calibration $\omega$ determines a distinguished class of minimal surfaces. Indeed, an $n$-dimensional $\mathcal{C}^2$-submanifold in $\mathbb{R}^m$ is \emph{$\omega$-calibrated} if every tangent space of the submanifold has an oriented orthonormal basis realizing the value one when evaluated against the form at the basepoint. A similar calibration condition can be formulated in the context of varifolds and integral currents, leading to $\omega$-calibrated varifolds and integral currents, respectively. One of the seminal contributions of \cite{Har:Law:82} was the discovery of constant coefficient calibrations such as the \emph{Special Lagrangian}, \emph{associative}, and \emph{Cayley} calibrations with rich families of calibrated varifolds and integral currents. The discovery was preceded by the works of Federer \cite{Fed:69} and Berger \cite{Ber:70} in the context of Kähler and quaterniotic geometries, respectively. We refer to the monographs \cite{Joy:07,Mor:09:book} for further background. The literature for (pseudo)holomorphic curves is vast, see the monograph \cite{McD:Sal:04} for an overview.

When considering the class defined by \eqref{eq:quasiregular:curve} for a non-zero constant-coefficient form, we may always normalize the comass \eqref{eq:comass:extremality} to one. Thus we exclusively consider constant-coefficient calibrations $\omega \in \Lambda^{n}\mathbb{R}^m$ for $2 \leq n \leq m$. In this case, \eqref{eq:quasiregular:curve} simplifies to the equality
\begin{equation}\label{eq:conformalcurve:calibration}
    F \in \mathcal{C}^{1}( \Omega, \mathbb{R}^m )
    \quad\text{and}\quad
    \| DF \|^n = \star F^{*}\omega
    \quad\text{everywhere}.
\end{equation}
Equations \eqref{eq:conformal} and \eqref{eq:holomorphic} provide archetypical examples of \eqref{eq:quasiregular:curve} and \eqref{eq:conformalcurve:calibration}. Conformal curves have also been studied under the name \emph{Smith maps} by Cheng--Karigiannis--Madnick and Iliashenko--Karigiannis \cite{Che:Kari:Mad:20,Ilia:Karig:23} based on the associative geometry and under the name \emph{calibrated curves} by Heikkilä--Pankka--Prywes in \cite{Hei:Pan:Pry:23}. The conformal curves form a subclass of the quasiregular curves introduced by Pankka in \cite{Pan:20} where the equality \eqref{eq:quasiregular:curve} (resp. \eqref{eq:conformalcurve:calibration}) is relaxed to a two-sided comparability.

\subsection{Energy growth and curves}
The author and Pankka recently investigated conformal curves \cite{Iko:Pan:24} in the context of the Liouville theorem (for conformal mappings). Recall that Liouville's theorem states that for $3 \leq n$, any conformal mapping $f \colon \Omega \rightarrow \mathbb{R}^n$ from a domain $\Omega \subset \mathbb{R}^n$ is the restriction of a Möbius transformation. Analogously, we proved that for a $G_{\delta}$-dense subset of constant-coefficient calibrations $\omega \in \Lambda^n \mathbb{R}^m$, when $3 \leq n \leq m$, every conformal $\omega$-curve $\Omega \rightarrow \mathbb{R}^m$ from a domain $\Omega \subset \mathbb{R}^n$ factors through a Möbius transformation $\Omega \rightarrow \mathbb{R}^n$ and an affine map $\mathbb{R}^n \rightarrow \mathbb{R}^m$. In particular, if $\Omega = \mathbb{R}^n$, such a curve is an affine map. This stemmed from the isoperimetric rigidity of conformal curves based on $\omega$-calibrated integral currents, Almgren's isoperimetric inequality for integral currents \cite{Alm:86}, and the interaction of the Cauchy--Riemann equation \eqref{eq:conformalcurve:calibration} with pushforward of integral currents (see \cite{Fed:69}). As a particular example of these interactions, for an entire conformal curves $\mathbb{R}^n \rightarrow \mathbb{R}^m$, the average energy
\begin{align}\label{eq:asysmpoticdegree:relative:omega}
    r
    \mapsto
    h(r)
    \coloneqq
    \aint{ B_{r}(x_0) } \|DF\|^n dx
    \text{ is non-decreasing (see \Cref{lemm:localrigid} below).}
\end{align}
To deduce \eqref{eq:asysmpoticdegree:relative:omega}, the sharp isoperimetric constant for integral currents is critical.

The author also proved in \cite{Iko:23} (see \cite[Proposition 7.1]{Iko:Pan:24} for the formulation below) a related modulus of continuity estimate 
\begin{equation}\label{eq:modulusofcontinuity}
    | F(x) - F(y) |
    \leq
    C(n)
    \left( \aint{ B_{2r}(x_0) }  \|DF\|^n dx \right)^{ \frac{1}{n} }
    |x-y|
    \quad\text{for $x, y \in B_{r}(x_0)$.}
\end{equation}
We refer to the limiting value $\lim_{ r \rightarrow \infty } h(r)$ in \eqref{eq:asysmpoticdegree:relative:omega} as the \emph{asymptotic growth} of $F$ (relative to $x_0 \in \mathbb{R}^n$). We say that the asymptotic growth is \emph{super-Euclidean} if the limit is infinite and \emph{bounded} otherwise. By \eqref{eq:asysmpoticdegree:relative:omega} and \eqref{eq:modulusofcontinuity}, bounded asymptotic growth is equivalent to a Lipschitz bound for a conformal curve. We also remark that there are conformal $\omega$-curves with infinite asymptotic growth in all dimensions $n \geq 2$. For instance, in \cite[Proposition 1.6]{Iko:Pan:24}, we considered a conformal curve $\mathbb{R}^n \rightarrow \mathbb{R}^{2n}$ for which $\|DF\|^n(x_1, \dots, x_n) = e^{n x_n}$ for $( x_1, \dots, x_n ) \in \mathbb{R}^n$ for $n \geq 2$, satisfying the Cauchy--Riemann equations relative to a Special Lagrangian calibration. In dimension two, simpler examples can be obtained by considering any holomorphic curve $\mathbb{C} \rightarrow \mathbb{C}^k$ with at least one non-affine component.

Having bounded asymptotic growth is a rigid property in all dimensions.
\begin{theorem}\label{thm:euclidean:to:affine}
For every calibration $\omega \in \Lambda^n \mathbb{R}^m$, every conformal $\omega$-curve $\mathbb{R}^n \rightarrow \mathbb{R}^m$ with bounded asymptotic growth is an affine linear map.
\end{theorem}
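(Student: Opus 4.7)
The strategy is a blow-down argument driven by the equality case of Almgren's sharp isoperimetric inequality.

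By \Cref{lemm:localrigid}, the average-energy function $h(r)$ is non-decreasing, so bounded asymptotic growth is equivalent to $h(r) \nearrow L$ for some finite $L$. Substituting this uniform bound into \eqref{eq:modulusofcontinuity} makes $F$ globally $C(n) L^{1/n}$-Lipschitz on $\mathbb{R}^n$. For any sequence $r_k \to \infty$, the rescalings $F_k(x) \coloneqq (F(r_k x) - F(0))/r_k$ are entire conformal $\omega$-curves (since \eqref{eq:conformalcurve:calibration} is scale-invariant), are uniformly Lipschitz, and satisfy $F_k(0) = 0$. By Arzel\`a--Ascoli together with the compactness theory for (quasi)regular curves developed in \cite{Pan:20,Iko:Pan:24}, a subsequence converges locally uniformly, and in $W^{1,n}_{\mathrm{loc}}$, to an entire conformal $\omega$-curve $F_\infty$ with
\[
\aint{B_r(0)} \|DF_\infty\|^n\, dx \;=\; \lim_{k \to \infty} \aint{B_{r_k r}(0)} \|DF\|^n\, dx \;=\; L
\quad\text{for every } r > 0.
\]

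I would then analyze the equality case of \Cref{lemm:localrigid} for $F_\infty$. The estimate $h' \geq 0$ chains Stokes' theorem (using a linear primitive of $\omega$), the pointwise comass bound $F^{\ast}\omega \leq \|DF\|^n\,\mathrm{vol}$, a H\"older inequality on $\partial B_r$, and Almgren's sharp isoperimetric inequality \cite{Alm:86}. Since the $h$-function of $F_\infty$ is constant, equality must hold at every radius and in every step. The rigidity case of Almgren's inequality pins $(F_\infty)_{\ast}[\partial B_r]$ to be the oriented boundary of a round $n$-disk in some affine $n$-plane $V_r \subset \mathbb{R}^m$; equality in the comass bound forces $DF_\infty$ to be a scalar multiple of an orthogonal inclusion into $V_r$ at every point; and continuity in $r$ shows $V_r$ is independent of $r$. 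Consequently $F_\infty$ is affine linear.

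The descent from $F_\infty$ back to $F$ is the technical heart of the argument, and what I expect to be the main obstacle. I plan to refine the monotonicity of \Cref{lemm:localrigid} into a quantitative form $h'(r) \geq c\,\mathcal{D}(r)$, where the non-negative deficit $\mathcal{D}(r)$ jointly measures how far $F|_{B_r}$ is from attaining equality in the comass, H\"older, and isoperimetric steps. Integrating yields $\int_R^\infty \mathcal{D}(r)\,dr \leq c^{-1}(L - h(R)) \to 0$ as $R \to \infty$, so $\mathcal{D}$ vanishes along some sequence $r_j \to \infty$. Applying the equality analysis directly to $F$ on annuli where $\mathcal{D}$ is small, and then propagating the resulting affine structure to all of $\mathbb{R}^n$ via the real-analyticity of conformal curves and unique continuation, forces $F$ itself to be affine. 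The delicate point is the quantitative form of Almgren's rigidity and its synthesis with the Cauchy--Riemann identity that is required to run this propagation.
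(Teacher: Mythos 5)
Your steps up to producing the blow-down limit $F_\infty$ and showing it is affine (in fact a linear isometry, since its average energy in every ball equals the normalized limit) match the paper's \Cref{lemm:blowdowns:linear}. The divergence is in what you do with that information, and this is precisely where your proposal has a genuine gap.

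Your ``descent'' strategy---a quantitative deficit estimate $h'(r) \geq c\,\mathcal{D}(r)$, integrated to force $\mathcal{D}(r_j) \to 0$, followed by a propagation of affine structure via real-analyticity and unique continuation---is not carried out in the paper, and for good reason: it requires a quantitative stability version of Almgren's sharp isoperimetric inequality (and of the comass and H\"older equality cases) that is not off-the-shelf, and the proposed propagation step is not formulated precisely enough to check. You flag this yourself, but it is not a ``delicate point'' to be filled in later; it is the missing idea. The paper instead extracts a different, softer consequence from the blow-down: because the blow-down limits are \emph{isometries} (not merely affine), one obtains that $F$ is \emph{proper} and that the maximal preimage radius $S_r$ of \Cref{cor:properness} satisfies $S_r/r \to 1$. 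Properness is the decisive observation you do not make: it guarantees that the pushforward current $T = F_\sharp[\mathbb{R}^n]$ is a well-defined boundedly finite, $\omega$-calibrated (hence area-minimizing) integral cycle. The classical monotonicity formula then says $r \mapsto \|T\|(B_r(F(x_0)))/(\omega_n r^n)$ is non-decreasing and $\geq 1$, while the bound
\[
\frac{\|T\|(B_r(F(x_0)))}{\omega_n r^n} \;\leq\; \Big(\frac{S_r}{r}\Big)^n \aint{B_{S_r}(x_0)} \|DF\|^n\,dx \;\longrightarrow\; 1
\]
forces the density ratio to equal one identically, so by Federer's rigidity (\cite[5.4.5(3)]{Fed:69}) the support of $T$ is an affine $n$-plane. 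The remainder (factor through $\mathbb{R}^n$, apply Liouville or the planar Liouville argument) is immediate. So: replace your speculative quantitative-deficit-plus-unique-continuation step with ``blow-down isometry $\Rightarrow$ properness $\Rightarrow$ $T$ is a global area-minimizing cycle $\Rightarrow$ monotonicity formula forces density one $\Rightarrow$ flat support'' and the argument closes cleanly.
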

The main content of \Cref{thm:euclidean:to:affine} is $n \geq 3$. Indeed, since the complex differential of a holomorphic curve is a holomorphic curve, it easily follows from \eqref{eq:asysmpoticdegree:relative:omega} and \eqref{eq:modulusofcontinuity} and the Liouville's theorem  for holomorphic mappings that a holomorphic curve with bounded asymptotic growth is an affine map. The methods in higher dimensions are more elaborate and build upon an analysis of a blow-down of such a curve. Nevertheless, we give a unified treatment of \Cref{thm:euclidean:to:affine} for all dimensions.

By combining \Cref{thm:euclidean:to:affine} with the isoperimetric rigidity of conformal curves, non-affine conformal curves have a super-Euclidean growth in the following strong sense.
\begin{corollary}\label{cor:Euclidean:nonaffine}
For every calibration $\omega \in \Lambda^n \mathbb{R}^m$ and every non-affine conformal $\omega$-curve $F \colon \mathbb{R}^n \to \mathbb{R}^m$, the average energy in \eqref{eq:asysmpoticdegree:relative:omega} is unbounded and strictly increasing for large radii.
\end{corollary}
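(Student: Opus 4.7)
The plan is to combine \Cref{thm:euclidean:to:affine} with the rigidity inherent in the equality case of the monotonicity underpinning \Cref{lemm:localrigid}.

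\emph{Unboundedness of $h$.} Since $h$ is non-decreasing by \eqref{eq:asysmpoticdegree:relative:omega}, the limit $\lim_{r \to \infty} h(r)$ exists in $[0, +\infty]$. A finite limit would, by \Cref{thm:euclidean:to:affine}, force $F$ to be affine, contradicting the hypothesis; hence $h(r) \to \infty$.

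\emph{Strict monotonicity at large radii.} Argue by contradiction: if $h$ fails to be strictly increasing on any half-line $[R, \infty)$, then the non-decreasing property of $h$ yields sequences $R_k \to \infty$ and $S_k > R_k$ such that $h$ is constant on each $[R_k, S_k]$. The monotonicity of \Cref{lemm:localrigid} is produced by applying Almgren's sharp isoperimetric inequality to the $\omega$-calibrated integral currents obtained by pushing forward $B_r(x_0)$ under $F$. Its equality case forces $F$ to restrict to a cone from $x_0$ on each ball $B_{S_k}(x_0)$, that is,
\[
F\bigl(x_0 + t(x - x_0)\bigr) - F(x_0) = t\bigl(F(x) - F(x_0)\bigr)
\]
for every $x \in B_{S_k}(x_0)$ and every $t \in [0, 1]$. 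Since $S_k \to \infty$, $F$ is a cone from $x_0$ on all of $\mathbb{R}^n$. A direct computation using the cone relation then shows $\|DF\|^n$ is $0$-homogeneous under dilations centered at $x_0$, so $h$ is independent of $r$, contradicting the unboundedness established above.

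The principal obstacle is upgrading constancy of $h$ on an interval to cone rigidity on the corresponding ball $B_{S_k}(x_0)$. The monotonicity in \Cref{lemm:localrigid} invokes only the inequality side of Almgren's estimate; its equality case must be carefully identified in the proof of \Cref{lemm:localrigid} and translated through the push-forward construction into a cone structure for $F$ itself.
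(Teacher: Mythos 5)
Your unboundedness step is exactly the paper's: a finite limit for the non-decreasing $h$ would make $F$ Lipschitz, hence affine by \Cref{thm:euclidean:to:affine}, contradicting the hypothesis. That half is fine.

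For strict monotonicity, however, you have overcomplicated the argument and, in the process, introduced a mechanism that is not the right one. You treat \Cref{lemm:localrigid} as if it supplies only the inequality (monotonicity) and then try to extract rigidity independently from Almgren's equality case in the form of a \emph{cone} identity
$F(x_0 + t(x-x_0)) - F(x_0) = t(F(x)-F(x_0))$. But \Cref{lemm:localrigid} already states the rigidity you need verbatim: \enquote{if the derivative is zero at some $r \in \{h>0\}$, then $F$ is affine.} Since $F$ is assumed non-affine, this yields $h'(r) > 0$ for every $r$ with $h(r)>0$, i.e. strict monotonicity on $\{h>0\}$, and $\{h>0\}$ is eventually a half-line because $h$ is non-decreasing and, by the first half, unbounded. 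No contradiction argument, no sequences $R_k, S_k$, and no cone rigidity are needed; the paper's proof is precisely this one-line combination.

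The cone step you flag as the \enquote{principal obstacle} is also a genuine misidentification of the rigidity mechanism, not merely a gap to be filled. In the proof of \Cref{lemm:localrigid}, equality in \eqref{eq:intermediateinequality:almgren} is fed through \cite[Theorem 4.1]{Iko:Pan:24} to conclude that the \emph{image} $F(B(x_0,r))$ lies in an $n$-dimensional affine subspace; this is then upgraded, via the elementary factorization \cite[Theorem 3.8]{Iko:Pan:24} together with Liouville's theorem (for $n\geq 3$) or Carleman's equality analysis (for $n=2$), to the statement that $F$ is affine. Nowhere does one obtain, or need, a cone structure for $F$ centered at $x_0$: the isoperimetric equality case for the pushed-forward current constrains the image and the boundary data, not the parametrization itself. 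Your proposed route would therefore stall at exactly the point you identify, and even if the cone identity could be obtained, the subsequent deduction that a cone map has $h$ constant in $r$ is not a substitute for the affineness conclusion the corollary actually requires. Replace the contradiction argument by a direct appeal to the last sentence of \Cref{lemm:localrigid}.
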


\begin{remark}\label{remark-growth-of-subharmonic-functions}
\Cref{thm:euclidean:to:affine} has an interpretation in potential analytic terms when $n \geq 3$. To elaborate on the topic, the author and Pankka \cite[Section 4.1]{Iko:Pan:24} proved that if $F \colon \mathbb{R}^n \to \mathbb{R}^m$ is a non-constant conformal $\omega$-curve for a calibration $\omega \in \Lambda^{n} \mathbb{R}^m$, then $\rho = \|DF\|^{ \frac{n-2}{2} }$ is subharmonic. By a theorem due to Greene--Wu \cite{greene-wu-1974-integrals-subharmonic-functions-on-manifolds-of-nonnegative-curvature}, it follows that $\rho \not\in L^{p}( \mathbb{R}^n )$ for $p \in [1,\infty)$. Interpreting \Cref{thm:euclidean:to:affine} in this context, we conclude that either $F$ is affine or $\|DF\| \not\in L^{p}( \mathbb{R}^n )$ for $p \in [(n-2)/2, \infty]$.

Similar analysis holds when $n = 2$ but instead $\rho = \log \|DF\|$ is subharmonic. In this context, \Cref{thm:euclidean:to:affine} yields that either $F$ is affine or $\rho$ is not bounded from above (where we recall that an entire subharmonic function bounded from above in $\mathbb{R}^2$ is constant by the Liouville's theorem for subharmonic functions).
\end{remark}
%
%
\subsection{A factorization through a Euclidean quotient}
To set the stage for the subsequent theorem, following Gromov \cite{Gro:07}, we say that $N$ is \emph{elliptic} if there exists a wrapping map $f \colon \mathbb{R}^n \rightarrow N$. Given a closed and oriented Riemannian $n$-manifold $N$, a Lipschitz map $f \colon \mathbb{R}^n \rightarrow N$ is a \emph{wrapping map} if 
\begin{align}\label{eq:asymptoticdegree}
    \limsup_{ r \rightarrow \infty }
        \frac{ 1 }{ r^n }
        \int_{ B_{r}(x) } f^{*}\vol_{N}
        >
        0
\end{align}
where $f^{*}\vol_{N}$ is the pullback of the volume form. We also recall the related notion of \emph{quasiregular ellipticity} coined by Bonk--Heinonen \cite{Bo:He:01}. We say that a closed and oriented Riemannian $n$-manifold is \emph{quasiregularly elliptic} if there exists a non-constant quasiregular map $f \colon \mathbb{R}^n \rightarrow N$. Quasiregular maps are generalizations of conformal mappings between equi-dimensional spaces which naturally occur in complex dynamics, elliptic PDEs, and piecewise linear geometries, for instance. In general such maps are not Lipschitz but have an asymptotic growth bound similar to \eqref{eq:asymptoticdegree} for an exponent $c(n) \neq n$ when the image is not a homology sphere such as a lens space, see \cite[Theorem 1.11]{Bo:He:01}. We refer the reader to the recent survey \cite{Manin:Prywes:24} by Manin and Prywes for further connections between these notions of ellipticity. We simply mention that de Rham cohomology of an elliptic and quasiregularly elliptic manifold admits a graded algebra embedding into the exterior algebra of $\mathbb{R}^n$, see \cite{berdkinov-guth-manin-2024-degrees-of-maps-and-multiscale-geometry,heikkila-pankka-2025-de-rham-algebras-of-closed-quasiregularly-elliptic-manifolds-are-euclidean}. See also \cite{heikkila-2024-quasiregular-curves-and-cohomology} for a related generalization for curves.

Based on the growth bound, \Cref{thm:euclidean:to:affine}, and \eqref{eq:modulusofcontinuity}, non-affine conformal curves $\mathbb{R}^n \to \mathbb{R}^{m}$ are not Lipschitz but do have super-Euclidean energy growth. Thus the energy growth of conformal curves share some features of the degree growth of wrapping and quasiregular maps, respectively. The theorem below concerns the factorization of a conformal curve through a submanifold and the imposed geometric constraints for the submanifold. Not only is the de Rham cohomology restricted as in the case of elliptic and quasiregularly elliptic manifolds but even the geometry of the space itself is highly constrained. More precisely, we have the following.
\begin{theorem}\label{thm:nonaffine:factorization}
If a non-constant conformal curve $\mathbb{R}^n \rightarrow \mathbb{R}^m$ factors through a connected smooth $n$-dimensional submanifold, the submanifold is a calibrated submanifold of $\mathbb{R}^m$ that is conformally equivalent to a non-compact quotient of $\mathbb{R}^n$. When $n \geq 3$, the curve is a conformal covering map of the submanifold and factors as a composition of a locally isometric covering map $\pi \colon \mathbb{R}^n \rightarrow N$ and a conformal embedding $g \colon N \rightarrow \mathbb{R}^m$ for a flat oriented manifold $N$.
\end{theorem}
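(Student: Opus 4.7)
The plan is to analyze the factorization $F = g \circ \tilde{F}$, where $g\colon N \hookrightarrow \mathbb{R}^m$ is the isometric inclusion of $N$ with the induced Riemannian metric and $\tilde{F}\colon\mathbb{R}^n\to N$ is the corestriction. First I would exploit the pointwise equation at a regular point $x\in\mathbb{R}^n$ of $F$: there the image of $DF(x)$ is $T_{F(x)}N$, and expanding $\|DF(x)\|^n = \omega(DF(x)e_1,\dots,DF(x)e_n)$ in an orthonormal frame of the tangent plane together with the comass bound $\|\omega\|\leq 1$ forces (i) the singular values of $D\tilde{F}(x)$ to coincide, so $D\tilde{F}(x)$ is an oriented conformal linear map, and (ii) the tangent plane $T_{F(x)}N$ to realize the calibration. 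Non-constancy of $F$ together with the subharmonicity of $\|DF\|^{(n-2)/2}$ (or $\log\|DF\|$ when $n=2$) from \Cref{remark-growth-of-subharmonic-functions} makes the critical set of $F$ have empty interior; closedness of the calibration condition and continuity of tangent planes then yield that $N$ is an $\omega$-calibrated submanifold of $\mathbb{R}^m$, and in particular non-compact (no closed calibrated submanifold exists in $\mathbb{R}^m$ since $H_n(\mathbb{R}^m) = 0$ and calibrated submanifolds minimize area in their homology class).

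With calibration established, $g^{*}\omega$ coincides with $\mathrm{vol}_N$ on $N$ and the defining equation becomes $\|D\tilde{F}\|^n = \star\tilde{F}^{*}\mathrm{vol}_N$. This identifies $\tilde{F}\colon(\mathbb{R}^n, g_{\mathrm{std}})\to(N, g_N)$ as a conformal map between Riemannian $n$-manifolds with factor $\lambda=\|D\tilde{F}\|$, so $\tilde{F}^{*}g_N = \lambda^2 g_{\mathrm{std}}$. The affine case is immediate: $N=F(\mathbb{R}^n)$ is an $\omega$-calibrated affine $n$-plane and $\pi,g$ are produced by splitting the scalar factor out of the linear part. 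In the non-affine case \Cref{thm:euclidean:to:affine} provides super-Euclidean energy growth, and for $n=2$ the conformal equivalence then follows by lifting $\tilde{F}$ to the universal cover of $N$ and ruling out the hyperbolic target via Liouville's theorem for bounded entire functions and the spherical target via non-compactness of $N$, leaving the Euclidean universal cover.

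For $n\geq 3$ I would first show that $\tilde{F}$ has no critical points. At a hypothetical critical point $x_0$ one may blow up both domain and target: since $N$ is smooth at $F(x_0)$ with tangent cone the calibrated plane $T_{F(x_0)}N$, the rescaled maps $F_k(x)=(F(x_0+r_k x)-F(x_0))/s_k$ with $s_k$ of order $r_k^2$ subconverge in $\mathcal{C}^{1}_{\mathrm{loc}}$ to a non-constant conformal $\omega$-curve $F_0\colon\mathbb{R}^n\to T_{F(x_0)}N$ with $DF_0(0)=0$; crucially, because $DF(x_0)=0$ and the image lies in $N$, the quadratic term of $F$ at $x_0$ is already tangential to $N$, so the limit lands in the calibrated plane. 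Because the target is an $n$-plane and the plane is calibrated, $F_0$ is a classical $C^1$ conformal map between $n$-dimensional Euclidean spaces, and the Liouville theorem in dimension $\geq 3$ forces any non-constant everywhere-defined such map to be a similarity, which has no critical points — a contradiction. Consequently $\tilde{F}$ is a conformal local diffeomorphism on all of $\mathbb{R}^n$; the super-Euclidean energy growth combined with the modulus of continuity \eqref{eq:modulusofcontinuity} shows that the pullback metric $\lambda^2 g_{\mathrm{std}}$ is complete, and Riemannian covering theory promotes $\tilde{F}$ to a covering map onto $N$.

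Once $\tilde{F}$ is a covering, the deck group $\Gamma$ acts on $\mathbb{R}^n$ by diffeomorphisms preserving $\lambda^2 g_{\mathrm{std}}$ and hence by conformal self-maps of $(\mathbb{R}^n, g_{\mathrm{std}})$. By Liouville each $\gamma\in\Gamma$ is a Möbius transformation of $\mathbb{R}^n$, and bijectivity on all of $\mathbb{R}^n$ together with free action forces $\gamma$ to be a Euclidean isometry (similarities with scale $\neq 1$ have fixed points, and proper inversions have poles). Hence $\lambda$ descends to a function $\bar\lambda$ on $N$, the rescaled metric $\bar\lambda^{-2}g_N$ is flat, and $F=g\circ\pi$ emerges with $\pi\colon\mathbb{R}^n\to N$ locally isometric for the flat metric and $g\colon N\to\mathbb{R}^m$ a conformal embedding, as required. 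I expect the main obstacle to be the no-critical-points step: establishing $\mathcal{C}^{1}_{\mathrm{loc}}$ subconvergence of the rescaled sequence to a non-trivial limit requires uniform $C^1$ bounds from elliptic regularity for conformal curves, and correctly pinning the limit inside the tangent plane via the tangentiality of the second derivative of $F$ at the critical point is the most delicate piece.
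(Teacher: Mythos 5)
Your route differs substantially from the paper's, and it contains a genuine gap at the step where $\tilde{F}$ is promoted to a covering map. You argue that ``super-Euclidean energy growth combined with the modulus of continuity shows that the pullback metric $\lambda^2 g_{\mathrm{std}}$ is complete, and Riemannian covering theory promotes $\tilde{F}$ to a covering map.'' This fails: completeness of $\tilde{F}^*g_N = \|DF\|^2 g_{\mathrm{std}}$ is not implied by super-Euclidean growth, and the paper's own example already refutes it. In \cite[Proposition~1.6]{Iko:Pan:24} one has $\|DF\|(x) = e^{x_n}$, so the pullback metric is $e^{2x_n}g_{\mathrm{std}}$; rays with $x_n\to -\infty$ have finite length, hence the pullback metric is incomplete, yet the curve does factor as a conformal covering map onto a submanifold $\simeq \mathbb{R}\times\mathbb{T}_{n-1}$. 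So the Ambrose-type covering criterion you invoke is unavailable, and this is precisely the kind of case the theorem must handle. The paper instead lifts the corestriction to the universal cover of the submanifold and invokes a higher-dimensional Picard theorem (\Cref{lemma:picard}): the lift is a local homeomorphism by Rickman's theorem for $1$-quasiregular maps in dimension $\geq 3$, is globally injective with zero-dimensional complement by Zorich's theorem, and is then a conformal diffeomorphism by Ferrand's regularity. The covering property of the curve falls out of the lift being a global diffeomorphism, with no completeness of a conformal pullback metric required.

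A secondary weakness is the no-critical-points blow-up argument, which you yourself flag as the delicate step. Beyond the $\mathcal{C}^1_{\mathrm{loc}}$ compactness you mention, there is no control on the order of vanishing of $F$ at a hypothetical critical point, so the $s_k \sim r_k^2$ scaling need not produce a non-constant limit, and the tangentiality-of-the-quadratic-term claim presupposes smoothness and a nondegenerate Hessian. In effect you would be re-deriving Rickman's local-homeomorphism theorem for $1$-quasiregular maps from scratch, which the paper uses directly as a black box. The parts of your argument on calibration of $N$ (comass saturation at regular points plus Gauss map continuity), non-compactness, and the deck-group rigidity forcing $\lambda = 1$ are sound and run parallel to the paper's \Cref{lemma:noncompact} and the proof's final paragraph, and your $n=2$ treatment via uniformization is essentially the paper's. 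The fix is to drop the completeness argument and replace it, together with the no-critical-points step, by the paper's \Cref{lemma:picard} (Rickman + Zorich + Ferrand).
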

When $n \geq 3$, one of the insights provided by \Cref{thm:nonaffine:factorization} is that the submanifold is conformally equivalent to an orbit space $\mathbb{R}^k \times \mathbb{T}_{n-k}/H$ where $H$ is a discrete subgroup of orientation-preserving isometries of $\mathbb{R}^k \times \mathbb{T}_{n-k}$ acting freely on $\mathbb{R}^k \times \mathbb{T}_{n-k}$ where $\mathbb{T}_{n-k}$ is the $(n-k)$-dimensional torus. This is based on the classification of non-compact oriented flat Riemannian spaces, see e.g. \cite[Chapter 8]{ratcliffe-2019-foundations-of-hyperbolic-manifolds}. We emphasize that the submanifold cannot be conformally equivalent to a quotient of the $n$-dimensional torus or to a quotient of a sphere by the non-compactness. The non-compactness can be deduced in many ways. For example, it follows from the calibration property, the Liouville's theorem for quasiregular curves \cite{Pan:20}, or from Caccioppoli's inequality, see \Cref{remark-caccioppoli} below.

The theorem above is sharp in two ways. First, the conformal curve in \cite[Proposition 1.6]{Iko:Pan:24} factors through $\mathbb{R} \times \mathbb{T}_{n-1}$ for the $(n-1)$-dimensional torus $\mathbb{T}_{n-1}$, so such a submanifold is not necessarily simply connected. Second, if $n = 2$, the complex exponential $z \mapsto e^{z}$ factors through the submanifolds $\mathbb{C} \setminus \{0\}$ or $\mathbb{C}$, so the covering map conclusion holds only when $n \geq 3$.

Since \Cref{thm:nonaffine:factorization} is independent of the other parts of the manuscript, we present its proof in \Cref{sec:factorization:submanifold}. In \Cref{sec:growthlemma}, we formulate the key growth result needed for \Cref{thm:euclidean:to:affine} and \Cref{cor:Euclidean:nonaffine} while the proofs are in \Cref{sec:mainproofs}.

\section{Factorization through a submanifold}\label{sec:factorization:submanifold}

We first recall two basic facts from the theory of minimal surfaces and conformal geometry, respectively.
\begin{lemma}\label{lemma:noncompact}
If $N \subset \mathbb{R}^m$ is a closed submanifold, then $N$ is not a minimal surface.
\end{lemma}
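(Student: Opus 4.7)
The plan is to apply the maximum principle to the smooth function $f \colon N \rightarrow \mathbb{R}$ defined by $f(x) = |x|^2$. The key classical identity is that for any smoothly immersed $n$-dimensional submanifold $N \subset \mathbb{R}^m$,
\[
    \Delta_N f(x) = 2n + 2\langle x, \vec{H}(x)\rangle,
\]
where $\vec{H}$ is the (unnormalized) mean curvature vector of $N$ in $\mathbb{R}^m$. This follows from a direct computation in a local orthonormal frame $(e_1,\dots,e_n)$ of $TN$, together with the identities $\nabla_{\mathbb{R}^m} f = 2x$ and $\mathrm{Hess}_{\mathbb{R}^m} f \equiv 2\,I$, combined with the ambient-to-intrinsic formula $\Delta_N f = \mathrm{tr}_{TN}(\mathrm{Hess}_{\mathbb{R}^m} f) + \langle \nabla_{\mathbb{R}^m} f, \vec{H}\rangle$. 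The trace of $2\,I$ over the $n$-dimensional subspace $TN$ contributes the constant $2n$.

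Suppose for contradiction that $N$ is a minimal submanifold, i.e.\ $\vec{H} \equiv 0$. Then $\Delta_N f \equiv 2n > 0$ on $N$, so $f$ is a smooth, strictly subharmonic function. Since $N$ is closed, that is, compact and without boundary, the continuous function $f$ attains its maximum at some interior point $p \in N$. The strict maximum principle for smooth subsolutions forbids a function with $\Delta_N f(p) > 0$ from attaining an interior maximum at $p$, which yields the desired contradiction.

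The main obstacle, if any, is simply bookkeeping: one must verify the Laplacian formula carefully and invoke the hypothesis that $N$ is both compact and without boundary, since either alone is insufficient. However, the argument is a textbook application of the maximum principle, and because we apply the formula under the assumption $\vec{H} \equiv 0$, the conclusion is insensitive to the choice of sign convention for the mean curvature vector.
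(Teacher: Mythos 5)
Your proof is correct and is a standard variant of the paper's approach: the paper applies the maximum principle to the (harmonic) coordinate functions of the inclusion, while you apply the strict maximum principle to the strictly subharmonic function $|x|^2$. Both are textbook arguments hinging on minimality forcing harmonicity/subharmonicity and the compactness of $N$ forcing an interior maximum.
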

There are several ways to see \Cref{lemma:noncompact}. For instance, when $N$ is minimal, the coordinate functions of the inclusion $N \xhookrightarrow{} \mathbb{R}^m$ are harmonic, and thus constant by the maximum principle if $N$ were closed. See \cite[Chapter I, Corollary 10]{Lawson:80} for an alternate proof.

The second basic fact we need is the following formulation of Picard's theorem in higher dimensions.
\begin{lemma}\label{lemma:picard}
When $n \geq 3$ and $\widetilde{N}$ is a simply connected manifold without boundary, a non-constant conformal map $\widetilde{f} \colon \mathbb{R}^n \rightarrow \widetilde{N}$ is a conformal diffeomorphism onto $\widetilde{N}$ or $\widetilde{N}$ is conformally equivalent to the $n$-sphere and the image of $\widetilde{f}$ omits a point.
\end{lemma}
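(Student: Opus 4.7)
The plan is to reduce the statement to Liouville's rigidity theorem for conformal self-maps of $S^n$ via a completeness/compactification dichotomy for the pulled-back metric on $\mathbb{R}^n$.

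First, I would argue that $\widetilde{f}$ is a local diffeomorphism. Since the conformal equation $\|D\widetilde{f}\|^n = \star\widetilde{f}^{*}\mathrm{vol}_{\widetilde{N}}$ is elliptic and overdetermined in dimension $n\geq 3$, standard regularity promotes $\widetilde{f}$ to a real-analytic map after composition with real-analytic charts on $\widetilde{N}$; Liouville's rigidity applied in such a chart forces any critical point $x_0$ of a non-constant $\widetilde{f}$ to have vanishing Taylor jets to all orders, so that analytic continuation would render $\widetilde{f}$ constant, contradicting the hypothesis. Consequently $h\coloneqq \widetilde{f}^{*}g_{\widetilde{N}}=\|D\widetilde{f}\|^{2}g_{\mathbb{R}^n}$ is a smooth Riemannian metric on $\mathbb{R}^n$ and $\widetilde{f}\colon(\mathbb{R}^n,h)\to(\widetilde{N},g_{\widetilde{N}})$ is a local isometry.

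Next, I would split on the completeness of $(\mathbb{R}^n,h)$. In the complete case, a local isometry from a complete Riemannian manifold is a Riemannian covering; combined with simple connectedness of $\widetilde{N}$, this upgrades $\widetilde{f}$ to a conformal diffeomorphism onto $\widetilde{N}$, yielding the first alternative. In the incomplete case, pick an $h$-Cauchy sequence $(x_k)$ with $|x_k|\to\infty$; after replacing $g_{\widetilde{N}}$ by a complete representative of its conformal class (Nomizu--Ozeki), the images $\widetilde{f}(x_k)$ converge to some $p\in\widetilde{N}$. Identifying $\mathbb{R}^n = S^n\setminus\{\infty\}$ via stereographic projection, this gives a continuous extension $\widetilde{f}(\infty)=p$; removable singularity for conformal maps in dimension $n\geq 3$, applied in an inverted chart $y=x/|x|^2$ about $\infty$, promotes the extension to a $C^1$ conformal local diffeomorphism $\widetilde{f}\colon S^n\to\widetilde{N}$. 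As a local diffeomorphism from a compact simply connected manifold onto the simply connected $\widetilde{N}$, this extension is a covering map, hence a diffeomorphism. Therefore $\widetilde{N}$ is conformally equivalent to $S^n$ and $\widetilde{f}(\mathbb{R}^n)$ omits exactly the single point $p$.

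The main obstacle is the removable-singularity step in the incomplete case: one must verify that the continuous extension at $\infty$ is in fact $C^1$ and satisfies the conformal equation in local coordinates around $p\in\widetilde{N}$. This reduces to a local Liouville-type rigidity argument in dimension $n\geq 3$ together with $C^1$ regularity for conformal limits coming from the elliptic estimates underlying Step 1.
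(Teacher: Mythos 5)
Your approach is genuinely different from the paper's: the paper establishes local injectivity via Rickman and then applies Zorich's global homeomorphism theorem (as formulated by Gromov) together with an ends count and Ferrand's rigidity; you instead pull back the metric of $\widetilde{N}$, form the local isometry $\widetilde f\colon(\mathbb{R}^n,h)\to(\widetilde N,g)$, and split on the completeness of $(\mathbb{R}^n,h)$, using covering space theory in the complete case and a compactification argument in the incomplete case. The complete case is clean and correct. The incomplete case, however, has a real gap.

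The problem is the passage from a single convergent Cauchy sequence to a continuous extension at $\infty$. After replacing $g_{\widetilde N}$ by a complete conformal representative $\tilde g$ (the dichotomy must be phrased in terms of $\tilde h=\widetilde f^{*}\tilde g$, not $h$, else the images of Cauchy sequences need not converge), you produce one $\tilde h$-Cauchy sequence $(x_k)$ with $|x_k|\to\infty$ and $\widetilde f(x_k)\to p$. But nothing in the argument shows that the limit is independent of the sequence, i.e. that the metric completion of $(\mathbb{R}^n,\tilde h)$ adds exactly one point, or that $\widetilde f(x)\to p$ as $|x|\to\infty$ along every path. Without that, there is no well-defined map on $S^n$ to which the removable-singularity machinery can be applied. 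You flag the removable-singularity step as ``the main obstacle,'' but you locate the difficulty in upgrading a given continuous extension to a $C^1$ conformal one; in fact the harder and currently unproved claim is that a continuous extension exists at all. That uniqueness-of-limit statement is precisely the content supplied by Zorich's theorem (via the injectivity of $\widetilde f$ and the zero-dimensionality of the complement of its image, then the one-end argument), which the paper invokes directly; so as written, the incomplete branch of your dichotomy is missing the key input rather than replacing it. A secondary, smaller issue: the claim that $\widetilde f$ is a local diffeomorphism (Step 1) is sketched via real-analyticity and a jet-vanishing argument, but since $\widetilde N$ need not be locally conformally flat one cannot read off the Euclidean Liouville theorem in a chart; the paper instead cites Rickman's theorem for the absence of branching for $1$-quasiregular maps in dimension $n\ge 3$.
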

The complex exponential illustrates that the theorem is false when $n = 2$.
\begin{proof}
We first recall that conformal homeomorphisms between Riemannian manifolds are conformal diffeomorphisms by Ferrand's theorem \cite[Theorem A]{Lel:Fer:76}.

The following argument is based on the proof of \cite[Proposition 1.4]{Bo:He:01}. We first recall that $\widetilde{f}$ is a local homeomorphism due to \cite[Theorem II.10.5, p. 232]{Ric:93}. Then, by Zorich's theorem, $\widetilde{f}$ is a homeomorphism from $\mathbb{R}^n$ onto its image and the complement of the image in $\widetilde{N}$ is zero-dimensional, see \cite[p.336]{Gro:07} for this formulation. Since homeomorphisms preserve the number of ends, it follows that the complement of the image is at most a point. If the image omits a point, then $\widetilde{f}$ extends to a conformal homeomorphism from $\mathbb{S}^n \simeq \mathbb{R}^n \cup \{\infty\}$ onto $\widetilde{N}$ as point singularities have zero $n$-capacity. By Ferrond's result above, the extension is a conformal diffeomorphism. This implies that $\widetilde{N}$ is conformally diffeomorphic to the $n$-sphere $\mathbb{S}^n$. In case the image of $\widetilde{f}$ does not omit any points, then $\widetilde{f}$ is a conformal diffeomorphism onto $\widetilde{N}$. The claim follows.
\end{proof}

\begin{proof}[Proof of \Cref{thm:nonaffine:factorization}]
We consider a conformal $\omega$-curve $F \colon \mathbb{R}^n \to \mathbb{R}^m$ factoring through a submanifold $N \subset \mathbb{R}^m$.

We first consider the case $n \geq 3$. Let $f$ denote the corestriction of $F$ to $N$. The map $f$ is conformal and lifts to a conformal map $\widetilde{f} \colon \mathbb{R}^n \rightarrow \widetilde{N}$ where $\widetilde{N}$ is the Riemannian universal cover of $N$. By \Cref{lemma:picard}, either $\widetilde{f}$ is a conformal diffeomorphism onto $\widetilde{N}$ or $\widetilde{N}$ and $\mathbb{S}^n$ are conformally diffeomorphic. In the latter case, $\widetilde{N}$ is conformally diffeomorphic to $\mathbb{S}^n$, $f$ has full-rank everywhere, and its image excludes at most a point in $N$. Thus $N$ is $\omega$-calibrated by the continuity of the Gauss map of $N$. Since $N$ is closed as a quotient of $\widetilde{N}$, we obtain a contradiction with \Cref{lemma:noncompact} (see also \Cref{remark-caccioppoli} below). So only the former case is possible. As $f$ has full-rank at each point and maps onto the submanifold, the submanifold is $\omega$-calibrated by \cite[Theorem 1.1]{Iko:Pan:24}. The non-compactness of $N$ follows from \Cref{lemma:picard} (see also \Cref{remark-caccioppoli} below). Since $f$ is a conformal covering map of $N$, its covering group $G$ forms a subgroup of the (orientation-preserving) conformal automorphisms of $\mathbb{R}^n$ such that $G$ acts properly discontinuously and freely on $\mathbb{R}^n$. Then $( x \mapsto g(x) = \lambda A(x) + x_0 )$ for $\lambda \in (0,\infty)$, $A \in \mathrm{SO}(n)$, and $x_0 \in \mathbb{R}^n$ when $g \in G$. As the subgroup generated by $g \in G \setminus \{e\}$ acts properly discontinuously and freely on $\mathbb{R}^n$, it readily follows that $\lambda = 1$, i.e. that $g$ is an orientation-preserving isometry of $\mathbb{R}^n$. As $f$ is invariant under the action of $G$, it holds that $f$ descends to the orbit space $\mathbb{R}^n / G$. This finishes the proof of the case $n \geq 3$.

Next, we consider the case $n = 2$. We claim that $N$ is conformally equivalent to $\mathbb{R}^2$ or $\mathbb{R}^2 \setminus \{0\}$. To see this, by the uniformization theorem and Liouville's theorem, the Riemannian universal cover of $\widetilde{N}$ is conformally equivalent to $\mathbb{R}^2$ or $\mathbb{S}^2$. In either case, by the Picard theorem, the lift $\widetilde{f}$ of $f$ omits at most two points of $\widetilde{N}$, and as the branch set of $f$ is discrete, it follows that $N$ is $\omega$-calibrated outside a discrete set. However, by continuity of the Gauss map of $N$, $N$ is $\omega$-calibrated and thus minimal. Observe that $N$ is oriented by $\omega$ and non-compact by \Cref{lemma:noncompact}. Thus $N$ is conformally equivalent to $\mathbb{R}^2$ or $\mathbb{R}^2\setminus\{0\}$. By Picard's theorem, the map $f$ omits at most a point in the former case while it must map onto in the latter case.
\end{proof}
\begin{remark}\label{remark-caccioppoli}
The noncompactness of $N$ in \Cref{thm:nonaffine:factorization} has a more direct proof. Indeed, it holds that
\begin{align}\label{equation-caccioppoli}
    \left( \aint{ B_{r}(x_0) } \|DF\|^n \,dy \right)^{1/n} \leq C(n) \frac{ \diam( F( B_{2r}(x_0) ) ) }{ 2r }
\end{align}
by the Caccioppoli inequality for conformal curves, see e.g. \cite[Proposition 7.1]{Iko:Pan:24}. Thus, if $F$ is entire and non-constant, it holds that $r \mapsto \diam( F( B_{r}(x_0) ) )$ grows at least linearly as $r \rightarrow \infty$. We note that in \cite[Proposition 1.6]{Iko:Pan:24}, the diameter $\diam( F( B_{r}(x_0) ) )$ grows exponentially as $r \rightarrow \infty$.

Combining \eqref{eq:modulusofcontinuity} with \eqref{equation-caccioppoli} implies that an entire conformal curve $F \colon \mathbb{R}^n \to \mathbb{R}^m$ has super-Euclidean growth if and only if $r \mapsto \diam F( B_{r}(x_0) )$ grows super-linearly.
\end{remark}

\section{A growth lemma}\label{sec:growthlemma}
We start with the proof of the first half of \Cref{cor:Euclidean:nonaffine} as this will be important for the proof of \Cref{thm:euclidean:to:affine}.
\begin{lemma}\label{lemm:localrigid}
Let $2 \leq n \leq m$ and let $F \colon \mathbb{R}^n \rightarrow \mathbb{R}^m$ be a conformal $\omega$-curve for a calibration $\omega \in \Lambda^n \mathbb{R}^m$. Let $x_0 \in \mathbb{R}^n$. Then
\begin{align*}
    r \mapsto h(r) = \aint{ B_{r}(x_0) } \|DF\|^n \,dx
\end{align*}
is non-decreasing. Moreover, if the derivative is zero at some $r \in \{ h >  0\}$, then $F$ is affine.
\end{lemma}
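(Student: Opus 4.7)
The plan is to derive the monotonicity from Almgren's sharp isoperimetric inequality applied to the pushforward integral current $T_r := F_\ast[B_r(x_0)]$, using the calibration structure to pin down its mass. Set $V(r) = \int_{B_r(x_0)} \|DF\|^n \, dx$; by the coarea formula, $V$ is absolutely continuous with $V'(r) = \int_{\partial B_r(x_0)} \|DF\|^n \, d\mathcal{H}^{n-1}$ for almost every $r$, so the assertion $h'(r) \geq 0$ reduces to the pointwise bound $n V(r) \leq r V'(r)$.

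The key step is to recognize $T_r$ as an $\omega$-calibrated integral current. The comass bound and the general Jacobian bound give $\omega(T_r) \leq \mathbf{M}(T_r) \leq V(r)$, while the conformal identity \eqref{eq:conformalcurve:calibration} yields $\omega(T_r) = \int_{B_r(x_0)} F^\ast \omega = V(r)$, forcing $\mathbf{M}(T_r) = V(r)$ and identifying $T_r$ as calibrated. Similarly the $(n-1)$-Jacobian bound yields $\mathbf{M}(\partial T_r) \leq \int_{\partial B_r(x_0)} \|DF\|^{n-1} \, d\mathcal{H}^{n-1}$. Feeding these into Almgren's sharp isoperimetric inequality $\mathbf{M}(T_r)^{(n-1)/n} \leq c_n \mathbf{M}(\partial T_r)$, where $c_n$ is the Euclidean isoperimetric constant, and then applying Hölder on $\partial B_r(x_0)$, a short algebraic simplification that uses only $c_n$ and the surface area of a round sphere collapses to
\begin{equation*}
    V(r) \;\leq\; \frac{r}{n} \, V'(r),
\end{equation*}
which is precisely $h'(r) \geq 0$.

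For the rigidity statement, observe that $h'(r_0) = 0$ with $h(r_0) > 0$ forces equality in each of the bounds above at $r = r_0$. Saturation of Hölder on $\partial B_{r_0}(x_0)$ forces $\|DF\|$ to be constant on that sphere, while saturation of Almgren's inequality characterizes $T_{r_0}$ as a round affine $n$-disk of constant integer multiplicity; together with conformality of $F$, these conditions force the restriction $F|_{B_{r_0}(x_0)}$ to be affine. Unique continuation for conformal $\omega$-curves, e.g.\ via the subharmonicity of $\|DF\|^{(n-2)/2}$ (or $\log \|DF\|$ when $n = 2$) recalled in \Cref{remark-growth-of-subharmonic-functions}, then extends the affine identity from $B_{r_0}(x_0)$ to all of $\mathbb{R}^n$.

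The main obstacle I foresee is the equality analysis of Almgren's isoperimetric inequality in this calibrated setting. The monotonicity itself is a clean consequence of the sharp Euclidean constant together with the comass and Jacobian bounds, but extracting from pointwise equality that $T_{r_0}$ is a flat $n$-disk, rather than some other calibrated minimizer with the correct mass, likely requires either invoking the cone-construction proof of Almgren's inequality or combining the subharmonicity of $\|DF\|^{(n-2)/2}$ with the boundary information $\|DF\| = \mathrm{const.}$ on $\partial B_{r_0}(x_0)$ to conclude that $\|DF\|$ is constant on $B_{r_0}(x_0)$ itself, after which conformality upgrades the constancy to an affine formula.
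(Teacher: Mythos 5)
Your monotonicity argument is sound and, up to exposition, is the paper's own: the paper cites the inequality
\begin{equation*}
    \left( \aint{\partial B_r(x_0)} \|DF\|^{n-1}\, d\mathcal{H}^{n-1} \right)^{\frac{n}{n-1}} \geq \aint{B_r(x_0)} \|DF\|^n \, dx
\end{equation*}
as \cite[Theorem 4.1]{Iko:Pan:24} and combines it with H\"older exactly as you do, while you re-derive that inequality from scratch via the pushforward current $T_r = F_\sharp[B_r(x_0)]$, the comass and Jacobian bounds, and Almgren's sharp isoperimetric inequality. That is precisely what the cited theorem encapsulates, so this part of the proposal is correct.

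The rigidity part, however, contains a genuine gap exactly where you flagged uncertainty. Your local argument is salvageable: from $h'(r_0) = 0$ one gets equality in H\"older, hence $\|DF\| \equiv c$ on $\partial B_{r_0}(x_0)$, and equality $\aint{\partial B_{r_0}}\|DF\|^n = \aint{B_{r_0}}\|DF\|^n = c^n$; the maximum principle for the subharmonic function $\|DF\|^{(n-2)/2}$ (or $\log\|DF\|$ for $n=2$) then gives $\|DF\| \leq c$ on $B_{r_0}$, and the average forces $\|DF\| \equiv c$ on $B_{r_0}$. Together with the factorization through an affine $n$-plane and Liouville/Carleman this yields $F|_{B_{r_0}}$ affine. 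But your closing step — extending affineness to all of $\mathbb{R}^n$ by ``unique continuation via subharmonicity'' — does not work. Subharmonicity is not a unique continuation principle: a subharmonic function can be constant on an open ball without being globally constant (e.g.\ $u(x) = \max\{c,\, |x| - r_0 + c\}$). What is actually needed is a continuation principle specific to conformal $\omega$-curves, which the paper invokes as \cite[Corollary 4.5]{Iko:Pan:24}. In fact the paper sidesteps proving local affineness altogether: from equality in \cite[Theorem 4.1]{Iko:Pan:24} it deduces that $F|_{B_{r_0}}$ factors as $L\circ g$ with $g$ conformal into $\mathbb{R}^n$ and $L$ an affine isometry, identifies $g$ with the restriction of a M\"obius transformation $G$ (Liouville when $n\geq 3$, Carleman when $n=2$), applies the continuation principle to conclude $F = L\circ G|_{\mathbb{R}^n}$ on all of $\mathbb{R}^n$, and then uses that entire M\"obius transformations are affine. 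You should replace the subharmonicity step with that continuation principle (or an equivalent statement); as written the global extension is unsupported.
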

\begin{proof}
Since $F \in \mathcal{C}^{1}( \mathbb{R}^n, \mathbb{R}^m )$, it follows that $h$ is differentiable for $r > 0$. Furthermore, it holds that
\begin{align*}
    h'(r)
    &=
    \frac{ 1 }{ \omega_n r^{n} }
    \left(
        \int_{ \partial B_{r}(x_0) } \|DF\|^{n} \,d\mathcal{H}^{n-1}
        -
        \frac{n}{r}
        \int_{ B_{r}(x_0) } \|DF\|^{n} \,dx
    \right)
    \\
    &=
    \frac{ n }{ r }
    \left(
        \aint{ \partial B_{r}(x_0) } \|DF\|^{n} \,d\mathcal{H}^{n-1}
        -
        \aint{ B_{r}(x_0) } \|DF\|^{n} \,dx
    \right).
\end{align*}
By Hölder inequality,
\begin{align*}
    \aint{ \partial B_{r}(x_0) } \|DF\|^{n} \,d\mathcal{H}^{n-1}
    \geq
    \left( \aint{ \partial B_{r}(x_0) } \|DF\|^{n-1} \,d\mathcal{H}^{n-1} \right)^{ \frac{n}{n-1} }
\end{align*}
where \cite[Theorem 4.1]{Iko:Pan:24} yields that
\begin{align}\label{eq:intermediateinequality:almgren}
    \left( \aint{ \partial B_{r}(x_0) } \|DF\|^{n-1} \,d\mathcal{H}^{n-1} \right)^{ \frac{n}{n-1} }
    \geq
    \aint{ B_{r}(x_0) } \|DF\|^{n} \,dx.
\end{align}
Therefore $h'(r) \geq 0$. It follows that $h$ is non-decreasing. In case $h'(r) = 0$, then \eqref{eq:intermediateinequality:almgren} holds with an equality. Combining this with \cite[Theorem 4.1]{Iko:Pan:24} guarantees that the image $F( B(x_0,r) )$ is contained in an $n$-dimensional affine subspace of $\mathbb{R}^m$. The elementary factorization, \cite[Theorem 3.8]{Iko:Pan:24}, implies that $F|_{ B(x_0,r) }$ factors through a conformal map $g \colon B( x_0, r ) \rightarrow \mathbb{R}^n$ and an affine isometry $L \colon \mathbb{R}^n \rightarrow \mathbb{R}^m$. When $n \geq 3$, by Liouville's theorem, there exists a Möbius transformation $G \colon \mathbb{R}^n \cup \{\infty\} \rightarrow \mathbb{R}^n \cup \{\infty\}$ that restricts to $g$. Moreover, in case $h(r) > 0$, it follows that $G$ is non-constant. Then the continuation principle, \cite[Corollary 4.5]{Iko:Pan:24}, implies that $F = L \circ G|_{ \mathbb{R}^n }$. Since entire Möbius transformations are affine, it follows that $F$ is affine as claimed. When $n = 2$ and $h(r) > 0$, we have that $g$ is nonconstant. Since $\|DF\| = \|Dg\|$, the conclusion $h'(r) = 0$ leads to
\begin{align*}
    \left( \aint{ \partial B_{r}(x_0) } \|Dg\| \,d\mathcal{H}^{1} \right)^2
    =
    \aint{ \partial B_{r}(x_0) } \|Dg\|^2 \,d\mathcal{H}^{1}
    =
    \aint{ B_{r}(x_0) } \|Dg\|^2 \,dx.
\end{align*}
It follows from Carleman's work, see \cite[p. 159]{Carl:1921}, that the equalities hold if and only if $\|Dg\|^2$ is a constant or equal to the Jacobian of a non-constant Möbius transformation. In particular, there exists a Möbius transformation $G \colon \mathbb{R}^2 \cup \{\infty\} \rightarrow \mathbb{R}^2 \cup \{\infty\}$ that restricts to $g$. Again, the continuation principle, \cite[Corollary 4.5]{Iko:Pan:24}, implies that $F = L \circ G|_{ \mathbb{R}^2 }$. It follows that $G$ fixes the infinity point and is therefore an affine Möbius transformation. The claim follows.
\end{proof}

\section{Proof of the main results}\label{sec:mainproofs}

Let $\delta^n(y) = \lim_{ r \rightarrow \infty } \aint{ B_r(y) } \|DF\|^n \, dx$. Suppose that $\delta = \delta(x_0) < \infty$ for some $x_0 \in \mathbb{R}^n$. Our aim is to prove that $F$ is affine. We first observe that $\delta(y) = \delta$ for every $y \in \mathbb{R}^n$. Indeed, it holds that
\begin{align*}
    B_{ r - |x_0-y| }(x_0) \subset B_{r}(y) \subset B_{ r + |x_0-y| }(x_0)
\end{align*}
and therefore 
\begin{align*}
    \lim_{ r \rightarrow \infty }
    \aint{ B_{r}(y) } \|DF\|^n \,dx
    =
    \delta^n
    \quad\text{for every $y \in \mathbb{R}^n$.}
\end{align*}
With this observation and \Cref{lemm:localrigid} at hand, it follows that $F$ is $\delta$-Lipschitz. In case $\delta = 0$, it follows that $F$ is constant and the proof is complete. The remaining section concerns the case $\delta \in (0,\infty)$. We assume $\delta = 1$ from now on by considering $F/\delta$ in place of $F$.

\begin{lemma}\label{lemm:blowdowns:linear}
Whenever $( y_j, r_j )_{ j \geq 1 }$ are such that $r_j \rightarrow \infty$ and
\begin{align*}
    1 = \lim_{ j\to\infty } \aint{ B_{r_j}(y_j) } \|DF\|^n \,dx,
\end{align*}
then a subsequence of $( F_j(\cdot) = ( F(y_j + r_j \cdot ) - F(y_j) )/r_j )_{ j \geq 1 }$ converge uniformly on compact subsets of $\mathbb{R}^n$ to a linear isometry $G \colon \mathbb{R}^n \rightarrow \mathbb{R}^m$.
\end{lemma}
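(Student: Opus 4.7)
The plan is to extract a $1$-Lipschitz subsequential limit $G$ via Arzel\`a--Ascoli, upgrade the hypothesis $h_{y_j}(r_j)\to 1$ to uniform convergence of the average energies on every fixed ball, pass the pointwise identity $\|DF_j\|^n\,dx = F_j^*\omega$ to the limit using the null-Lagrangian structure of $F^*\omega$, and finally identify $G$ as a linear isometry by invoking \Cref{lemm:localrigid} a second time, now on $G$ itself.

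First I would observe that $\|DF\| \leq \delta = 1$ pointwise, which follows from the continuity of $\|DF\|$ combined with the monotonicity of $h$ and the normalization $\delta=1$. Consequently each $F_j$ is a $1$-Lipschitz conformal $\omega$-curve with $F_j(0)=0$, and Arzel\`a--Ascoli produces a subsequence converging locally uniformly to a $1$-Lipschitz $G\colon \mathbb{R}^n \to \mathbb{R}^m$ with $G(0)=0$. The rest of the argument aims to identify $G$ as a linear isometry. To promote the hypothesis I would show $\aint{B_R(z)} \|DF_j\|^n\,dx \to 1$ for every ball $B_R(z)$: a change of variables rewrites the left-hand side as $h_{y_j + r_j z}(R r_j)$, and setting $R' = \max(R+|z|,1)$, the inclusion $B_{R r_j}(y_j + r_j z) \subset B_{R' r_j}(y_j)$ together with $\|DF\|^n \leq 1$ gives
\[
    1 - h_{y_j + r_j z}(R r_j) \leq \frac{(R')^n}{R^n}\bigl(1 - h_{y_j}(R' r_j)\bigr),
\]
whose right-hand side vanishes by the monotonicity of $h_{y_j}$ (\Cref{lemm:localrigid}) and the standing hypothesis $h_{y_j}(r_j) \to 1$. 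Combined with $\|DF_j\|^n \leq 1$ this yields $\|DF_j\|^n \to 1$ in $L^1_{\loc}$, and hence $\|DF_j\| \to 1$ in $L^p_{\loc}$ for every $p < \infty$.

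The main obstacle is to recover pointwise information about $DG$ from the weak-$*$ limit $DF_j \overset{*}{\rightharpoonup} DG$ in $L^\infty_{\loc}$. The key observation is that $F_j^*\omega$ is a constant-coefficient combination of $n\times n$ minors of $DF_j$, i.e.\ of null Lagrangians; the standard compensated-compactness principle then gives $F_j^*\omega \to G^*\omega$ in the sense of distributions. Together with the $L^1_{\loc}$ convergence from the previous paragraph, this forces $G^*\omega = dx_1 \wedge \cdots \wedge dx_n$ almost everywhere. Evaluating on $\partial_1 \wedge \cdots \wedge \partial_n$ and applying the comass inequality,
\[
    1 = G^*\omega(\partial_1, \ldots, \partial_n) \leq \|\omega\|\sqrt{\det(DG^T DG)} \leq \|DG\|^n \leq 1,
\]
so every inequality is an equality. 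This forces all singular values of $DG(x)$ to equal one a.e., so $DG(x)$ is a linear isometry onto an $\omega$-calibrated $n$-plane at almost every $x$.

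At this stage $\|DG\|^n = \star G^*\omega = 1$ a.e., so $G$ is a weak conformal $\omega$-curve. By the regularity theory for conformal curves developed in \cite{Iko:Pan:24}, $G \in \mathcal{C}^1(\mathbb{R}^n,\mathbb{R}^m)$; its average-energy function $h_G$ is then identically $1$ on $(0,\infty)$, so $h_G'(r) = 0$ on $\{h_G > 0\}$, and \Cref{lemm:localrigid} forces $G$ to be affine. Since $G(0)=0$ and $\|DG\| \equiv 1$, $G$ is a linear isometry $\mathbb{R}^n \to \mathbb{R}^m$, as required.
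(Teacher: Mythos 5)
Your proof is correct and uses the same three essential ingredients as the paper's: Arzel\`a--Ascoli applied to the uniformly $1$-Lipschitz rescalings, weak continuity of the null Lagrangian $F^*\omega$ under locally uniform convergence with bounded gradients, and a second application of \Cref{lemm:localrigid} to the limit map. The difference is in the route from weak convergence of $F_j^*\omega$ to affinity of $G$: the paper only extracts the single piece of information $\aint{B_1(0)}\|DG\|^n\,dx = 1$ from the weak convergence (tested against $\mathbf 1_{B_1(0)}$), combines it with the trivial bound $\aint{B_s(0)}\|DG\|^n\,dx \leq 1$ for $s>1$ coming from the $1$-Lipschitz property, and immediately invokes \Cref{lemm:localrigid}; whereas you first upgrade the hypothesis to $\aint{B_R(z)}\|DF_j\|^n\,dx \to 1$ for all balls via the mass-comparison and monotonicity argument, deduce $\|DF_j\|^n \to 1$ in $L^1_{\loc}$, identify $G^*\omega = \vol_{\mathbb{R}^n}$ a.e.\ by uniqueness of distributional limits, and run the comass-equality chain to get the pointwise conclusion $\|DG\|=1$ a.e.\ before appealing to \Cref{lemm:localrigid}. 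Your mass-comparison estimate
\[
    1 - h_{y_j + r_j z}(R r_j) \leq \frac{(R')^n}{R^n}\bigl(1 - h_{y_j}(R' r_j)\bigr)
\]
is correct, as is the comass chain, and the regularity step you lean on (Lipschitz weak conformal curves are $\mathcal C^1$) is indeed available from \cite{Iko:Pan:24} — the paper uses the packaged form \cite[Proposition 7.2]{Iko:Pan:24} that the locally uniform limit of conformal $\omega$-curves is a conformal $\omega$-curve. In short: the extra work in your argument yields the stronger intermediate conclusion $\|DG\|\equiv 1$ a.e., but since both arguments end by invoking \Cref{lemm:localrigid} on $G$, it does not buy anything and the paper's shorter route to $h_G \equiv 1$ on $[1,\infty)$ is the more economical version of the same idea.
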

Since \Cref{lemm:blowdowns:linear} implies that the blow-downs of $F$ are linear isometries, the curve is proper, i.e. $F^{-1}(K)$ is compact whenever $K \subset \mathbb{R}^n$ is compact, and has the following geometric growth at infinity.
\begin{corollary}\label{cor:properness}
Let $x_0 \in \mathbb{R}^n$. Then, for every $r > 0$, consider the maximal and minimal radii $s_r \in (0,\infty]$ and $S_r \in (0,\infty]$, respectively, for which
\begin{align*}
    B_{s_r}(x_0) \subset F^{-1}( B_{r}(F(x_0)) ) \subset B_{S_r}(x_0).
\end{align*}
Then the radii satisfy $r \leq s_r \leq S_r$ and $\lim_{ r \rightarrow \infty } S_r/r = 1$. In particular, the map $F$ is proper.
\end{corollary}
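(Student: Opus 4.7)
My plan splits the claim into three parts: the elementary bound $r \leq s_r \leq S_r$, properness (finiteness of $S_r$), and the asymptotic ratio $S_r/r \to 1$. The first is routine. The second and third both reduce to the blow-down mechanism provided by \Cref{lemm:blowdowns:linear}; the key preparatory observation is that, by the paragraph preceding \Cref{lemm:blowdowns:linear}, $\delta(x_0) = 1$, so every sequence of scales $\rho_j \to \infty$ (taken with $y_j \equiv x_0$) satisfies the hypothesis of that lemma.

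The bound $s_r \geq r$ comes from $F$ being $1$-Lipschitz: $F(B_r(x_0)) \subset B_r(F(x_0))$ gives $B_r(x_0) \subset F^{-1}(B_r(F(x_0)))$, while $s_r \leq S_r$ is tautological.

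For properness, I would argue by contradiction. If $F^{-1}(B_r(F(x_0)))$ were unbounded, pick $y_j$ in it with $\rho_j \coloneqq |y_j - x_0| \to \infty$. Form the rescalings $F_j(x) \coloneqq (F(x_0 + \rho_j x) - F(x_0))/\rho_j$; by \Cref{lemm:blowdowns:linear} a subsequence converges locally uniformly to a linear isometry $G$. Passing to a further subsequence, $\hat{y}_j \coloneqq (y_j - x_0)/\rho_j$ converges to some unit vector $\hat{y}$. Then $|F_j(\hat{y}_j)| = |F(y_j) - F(x_0)|/\rho_j < r/\rho_j \to 0$, whereas $F_j(\hat{y}_j) \to G(\hat{y})$ with $|G(\hat{y})| = |\hat{y}| = 1$, a contradiction.

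The asymptotic bound is proved in the same spirit. Suppose $\limsup_{r \to \infty} S_r/r = L > 1$ (allowing $L = \infty$). Along a sequence $r_j \to \infty$ realizing $S_{r_j}/r_j \to L$, the definition of $S_{r_j}$ as a supremum of distances lets me pick $y_j$ with $\rho_j \coloneqq |y_j - x_0| \in (S_{r_j} - 1, S_{r_j}]$ and $F(y_j) \in B_{r_j}(F(x_0))$; note $\rho_j \to \infty$ because $S_{r_j} \geq r_j$. Blowing down at scale $\rho_j$, a subsequence of $F_j$ converges to a linear isometry $G$, and $\hat{y}_j \coloneqq (y_j - x_0)/\rho_j$ subconverges to a unit vector $\hat{y}$. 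Now $|F_j(\hat{y}_j)| < r_j/\rho_j \to 1/L$, while the limit value $G(\hat{y})$ has norm $1$, forcing $1 \leq 1/L$ and hence $L \leq 1$. Combined with $L \geq 1$ from $S_r \geq r$, this yields $L = 1$. The only mild subtlety is the selection of $y_j$ here, which is handled directly by the supremum characterization of $S_r$; otherwise the argument is a clean double application of \Cref{lemm:blowdowns:linear}.
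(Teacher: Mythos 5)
Your proposal is correct and uses essentially the same mechanism as the paper: both arguments rescale $F$ at the scale of a far point in the preimage of a ball and invoke \Cref{lemm:blowdowns:linear} to produce a linear isometry in the limit, then read off a contradiction by evaluating at the unit vector in the direction of that point. The only genuine difference is organizational. You split the work into two separate blow-downs --- one for properness (ruling out $S_r = \infty$), one for the ratio $S_r/r \to 1$ --- whereas the paper folds both into a single application by defining the auxiliary radii $T_i$ case-by-case ($T_i = 2 r_i$ when $S_{r_i} = \infty$, and $T_i = \frac{i-1}{i} S_{r_i}$ otherwise) and choosing $x_i$ outside $B_{T_i}(x_0)$; the resulting contradiction then simultaneously excludes $S_{r_i} = \infty$ and pins down the limsup. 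Your two-step version is slightly more transparent at the cost of a second invocation of the lemma; the paper's is more compressed. Either is a valid way to present the argument, and your selection of $y_j$ (which should really be taken in $[S_{r_j}-1, S_{r_j})$ rather than $(S_{r_j}-1, S_{r_j}]$, since the infimum defining $S_{r_j}$ is not attained for an open preimage) is a harmless slip with no effect on the limit.
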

\begin{proof}[Proof of \Cref{lemm:blowdowns:linear}]
Observe that $\| DF_j \|(y) \leq 1$ for $y \in \mathbb{R}^n$, so $( F_j )_{ j \geq 1 }$ is a sequence of $1$-Lipschitz maps that satisfy $F_j(0) = 0$. It follows from Arzelà--Ascoli theorem that a subsequence $( F_{j_i} )_{ i \geq 1 }$ converges uniformly in compact subsets of $\mathbb{R}^n$ to a $1$-Lipschitz map $G \colon \mathbb{R}^n \rightarrow \mathbb{R}^m$.

The weak convergence of $( F_{j_i}^{*}\omega )_{ i \geq 1 }$ to $G^{*}\omega$ in $L^{1}_{loc}( \mathbb{R}^n )$ follows from the locally uniform convergence by standard weak continuity of the minors of the differentials; see e.g. \cite[Lemma 5.10]{Rindler:18}. The fact that $G$ is a conformal $\omega$-curve follows, for instance, from \cite[Proposition 7.2]{Iko:Pan:24}. So \eqref{eq:conformalcurve:calibration} and the normalization of $( F_j )_{ j \geq 1}$ imply that
\begin{align*}
    \aint{ B_{1}(0) } \|DG\|^n \,dx
    =
    \lim_{ i\to\infty } \aint{ B_{r_{j_i}}(y_{j_i}) } \|DF\|^n \,dx
    =
    1.
\end{align*}
Since $G$ is $1$-Lipschitz, it also holds that
\begin{align*}
    \aint{ B_{s}(0) } \|DG\|^n \,dx
    \leq
    1
    \quad\text{for $s > 1$,}
\end{align*}
so $G$ is affine by \Cref{lemm:localrigid}. Since $G$ is an affine conformal curve and the average energy in a ball is one, the map $G$ is an affine isometry.
\end{proof}

\begin{proof}[Proof of \Cref{cor:properness}]
The $1$-Lipschitz property of $F$ implies that $r \leq s_{r} \leq S_{r}$ for every $r > 0$. Let $( r_i )_{ i \geq 1 }$ be a positive sequence tending to $\infty$ such that the ratio $S_{r_i}/r_i$ tends to $\limsup_{r \rightarrow \infty} S_{r}/r$. If we prove that the ratio tends to one, the conclusion $\lim_{ r \rightarrow \infty } S_{ r }/r = 1$ follows. This further implies that $F$ is proper.

Let $T_i = 2 \cdot r_i$ in case $S_{r_i} = \infty$ and $T_i = \frac{ i - 1 }{ i }S_{r_i}$ otherwise. Then consider $x_i \in F^{-1}( B_{ r_i }( F(x_0) ) ) \setminus B_{ T_i }(x_0)$ and $F_{i}(x) = ( F( x_0 + |x_i-x_0| x ) - F( x_0 ) )/ |x_i-x_0|$ for $x \in \mathbb{R}^n$ and $i \geq 1$.

Consider a subsequence such that $|x_{i_j}-x_0|/r_{i_j}$ tends to an accumulation point $\lambda$ of $\{ |x_i-x_0| / r_i \colon i \in \mathbb{N} \}$. If we prove that the accumulation point is one, the conclusion of the claim follows. By the choice of $( T_i )_{ i \geq 1 }$ and $( x_i )_{ i \geq 1 }$, it holds that $\lambda \leq 1$, so it suffices to prove that $\lambda \geq 1$. We apply \Cref{lemm:blowdowns:linear}.

Observe that $F_{i_j}(0) = 0$ and that
\begin{align*}
    \aint{ B_{1}(0) } \|DF_{i_j}\|^n \,dx
    =
    \aint{ B_{ |x_{i_j}-x_0| }(x_0) } \|DF\|^n \,dx
    \rightarrow
    1.
\end{align*}
Then, up to passing to a subsequence and relabeling, it holds that $( F_{i_j} )_{ j \geq 1 }$ converge uniformly on compact sets to an isometry $G \colon \mathbb{R}^n \rightarrow \mathbb{R}^m$ and $( ( x_{i_j} - x_0)/ |x_{i_j}-x_0| )_{ j \geq 1 }$ converges to some $x \in \partial B_{1}(0)$. It follows that $1 = | G(x) | = \lim_{ j \to \infty } | F_{i_j}( ( x_{i_j} - x_0)/ |x_{i_j}-x_0| ) | \leq \lim_{ j \rightarrow \infty } r_{i_j} / | x_{i_j}-x_0 | = \lambda$, so the proof is complete.
\end{proof}

\begin{proof}[Proof of \Cref{thm:euclidean:to:affine}]
In case $F$ is constant, the claim is already clear. Otherwise we normalize $F$ so that $F(0) = 0$ and $\sup_{x\in \mathbb{R}^n} \|DF\|(x) = 1$. We recall from \Cref{cor:properness} that $F$ is proper.

Since $F$ is proper, the pushforward $T = F_\sharp[ \mathbb{R}^n ]$ is a well-defined boundedly finite integral cycle that is $\omega$-calibrated. In fact, if $\Omega \subset \mathbb{R}^m$ is precompact and open, the mass measure $\|T\|$ satisfies
\begin{align*}
    \|T\|( \Omega )
    =
    \int_{ F^{-1}( \Omega ) } F^{*}\omega
    =
    \int_{ F^{-1}( \Omega ) } \|DF\|^n \,dx
\end{align*}
by \eqref{eq:conformalcurve:calibration} and the definition of the pushforward. In particular, $T$ is an area-minimizing integral cycle in the sense of boundedly-finite integral currents. By the monotonicity formula for $T$, see e.g. \cite[5.4.5]{Fed:69}, and the definition of $S_r$, we have
\begin{align*} 
    \omega_n r^n
    \leq
    \|T\|( B_{r}(F(x_0)) )
    &=
    \int_{ F^{-1}( B_{r}(F(x_0)) ) } \|DF\|^n \,dx
    \\
    &\leq
    \int_{ B_{S_r}(x_0) } \|DF\|^n \,dx.
\end{align*}
Therefore
\begin{align*}
    1
    \leq
    \frac{ \|T\|( B_{r}(F(x_0)) ) }{ \omega_n r^n }
    \leq
    \left( \frac{ S_r }{ r } \right)^n
    \aint{ B_{S_r}(x_0) } \|DF\|^n \,dx
\end{align*}
where the right-hand side converges to one as $r \rightarrow \infty$. Since
\begin{align*}
    r \mapsto \frac{ \|T\|( B_{r}(F(x_0)) ) }{ \omega_n r^n }
\end{align*}
is non-decreasing by the monotonicity formula (see e.g. \cite[5.4.3 (2)]{Fed:69}), it follows that $\|T\|( B_{r}(F(x_0)) ) = \omega_n r^n$ for every $x_0 \in \mathbb{R}^n$ and $r > 0$. We conclude that $\mathrm{spt}(T)$ is an affine subspace by \cite[5.4.5 (3)]{Fed:69}. The properness and the area formula for $F$ combined with \eqref{eq:conformalcurve:calibration} imply that $\mathrm{spt}(T) = F( \mathbb{R}^n )$. Therefore the image of $F$ is affine. The elementary factorization, \cite[Theorem 3.8]{Iko:Pan:24}, implies that $F$ factors through an entire conformal map $f \colon \mathbb{R}^n \rightarrow \mathbb{R}^n$ and an affine isometry $\mathbb{R}^n \rightarrow \mathbb{R}^m$. In case $n \geq 3$, it follows that $f$ is a Möbius transformation by the Liouville theorem. In case $n = 2$, we may use the fact that the complex differential of $f$ is a bounded holomorphic function and thus constant by Liouville's theorem. It follows that $F$ is affine in either case.
\end{proof}

\begin{proof}[Proof of \Cref{cor:Euclidean:nonaffine}]
\Cref{thm:euclidean:to:affine} implies that $F$ has super-Euclidean growth (relative to any $x_0 \in \mathbb{R}^n$). Combining this with \Cref{lemm:localrigid} implies that $h$ is strictly increasing in $\left\{ h > 0 \right\}$ and nondecreasing everywhere. The conclusion follows.
\end{proof}

\bibliographystyle{alpha}
\bibliography{Bibliography}

\end{document}